\documentclass[11pt]{article}
    \title{Complete Integrability for Piecewise-Smooth Distributions}
    \author{Jack McKee\thanks{Department of Mathematics, University of Hawai`i at M$\overline{\mbox{a}}$noa, ORCID 0009-0004-1865-9089, jmckee@math.hawaii.edu}    }
    \usepackage[bindingoffset=0.2in,
            left=1in,
            right=1.5in,
            top=1in,
            bottom=1in,
            footskip=.25in]{geometry}
            
    \usepackage{algpseudocodex}
    \usepackage{amsmath}
    \usepackage{amsfonts}
    \usepackage{amsthm}
    \usepackage{amssymb}
    \usepackage{xcolor}
    \usepackage{enumitem}
    \newtheorem{theorem}{Theorem}
    \newtheorem{definition}[theorem]{Definition}
    
    \newtheorem{example}[theorem]{Example}
    
        \newtheorem{proposition}[theorem]{Proposition}

    \newtheorem{lemma}[theorem]{Lemma}
    
    \newcommand{\pdiff}[2]{\frac{\partial #1}{\partial #2}}

\usepackage{graphicx}

\usepackage[style=numeric-comp,giveninits=true,url=false,doi=false,isbn=false,date=year,maxbibnames=100,backend=biber]{biblatex}
\bibliography{bibliography.bib}

\usepackage{hyperref}

\begin{document}
	
	\maketitle 
	\begin{abstract}
		Two generalizations of the Frobenius integrability theorem are proved concerning distributions which are piecewise-$C^1$ but may fail to be continuous. The criteria presented are sufficient, but not necessary, for complete integrability of such distributions with bi-Lipschitz coordinates.
	\end{abstract}

	\section{Introduction}
	
	A (smooth) distribution $D$ of corank $k$ on a manifold $M$ is a smooth sub-bundle of the tangent bundle, that is, the data of a subspace $D_x \subset T_xM$ for each point $x \in M$, such that $D_x$ has constant dimension $n - k$ and the map $x \mapsto D_x$ is smooth. An integral manifold of a distribution is an embedded submanifold $S \subseteq M$ such that $T_xS \subset D_x$ for all $x \in S$. In many cases, it is more convenient to work with a corresponding \emph{Pfaffian system} $\alpha = (\alpha^1,\dots,\alpha^k)$, where each $\alpha^i: M \to T^*M$ is a smooth differential one-form and $D_x = \ker\alpha^1|_x \cap \dots \cap \ker\alpha^k|_x$ for each $x$. In this case we say $\alpha$ \emph{annihilates} $D$. 

	A distribution is said to be \emph{completely integrable by $C^r$ diffeomorphisms} at the point $x_0 \in M$ if there exists an open neighborhood $U$ containing $x_0$ and a map $\Phi: U \to \mathbb{R}^n$ which is a $C^r$ diffeomorphism onto its image, and such that $\Phi(x_0) = 0$ and $d\Phi|_x(D) = \mathrm{Span}(\pdiff{}{y^{k+1}},\dots,\pdiff{}{y^n})$ for all $x \in M$. The level sets of these maps fit together to form integral manifolds of the distribution.
		
	A theorem usually attributed to Frobenius establishes necessary and sufficient conditions for complete integrability.
	\begin{theorem}{Frobenius Integrability Theorem \cite{Warner1983}:}\label{classicalfrob}
	
		The smooth distribution annihilated by $(\alpha^1,\dots,\alpha^k)$ is completely integrable by $C^\infty$ diffeomorphisms if and only if for any $x_0 \in M$, there exists an open set $U$ containing $x_0$ and one-forms $A^i_j\in \Lambda^1(U),~ i,j = 1,\dots,k$ such that
		\[d\alpha^i = A^i_j \wedge \alpha^j.\]
		Equivalently, for any pair of vector fields $f_1,f_2 \in \Gamma(D|_U)$, $[f_1,f_2] \in \Gamma(D|_U)$.
		A distribution satisfying this condition is called \emph{involutive}.
	\end{theorem}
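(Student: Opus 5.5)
We prove the classical Frobenius theorem in three steps: necessity, equivalence of the two involutivity conditions, and sufficiency by induction on the rank $n-k$.

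\emph{Necessity.} Suppose $\Phi = (y^1,\dots,y^n)$ integrates $D$ near $x_0$. Then $D$ is also annihilated by $dy^1,\dots,dy^k$. Since both Pfaffian systems annihilate the same corank-$k$ distribution and have independent members, there is a smooth invertible matrix $g^i_j$ on $U$ with $\alpha^i = g^i_j\, dy^j$. Differentiating gives
\[
d\alpha^i = dg^i_j\wedge dy^j = dg^i_j\wedge (g^{-1})^j_l\,\alpha^l ,
\]
so we may take $A^i_l = dg^i_j\,(g^{-1})^j_l$.

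\emph{Equivalence of the two conditions.} This comes from the identity
\[
d\alpha(f_1,f_2) = f_1(\alpha(f_2)) - f_2(\alpha(f_1)) - \alpha([f_1,f_2]).
\]
If $f_1,f_2\in\Gamma(D)$, the first two terms vanish, so $\alpha^i([f_1,f_2]) = -d\alpha^i(f_1,f_2)$. If $d\alpha^i = A^i_j\wedge\alpha^j$, the right-hand side vanishes on $D$, and hence $[f_1,f_2]\in\Gamma(D)$.

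For the converse, locally complete the $\alpha^i$ to a coframe $\alpha^1,\dots,\alpha^k,\beta^{k+1},\dots,\beta^n$ and expand $d\alpha^i$ in the corresponding basis of 2-forms. Involutivity says $d\alpha^i$ vanishes on pairs from $D$. The vectors dual to the $\beta$'s span $D$, so the coefficients of $\beta^a\wedge\beta^b$ are zero. Every remaining term contains some $\alpha^j$, which yields the $A^i_j$.

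\emph{Sufficiency.} This is the main step; we use the bracket formulation and induct on $m = n-k$.

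For $m=1$, $D$ is spanned locally by a single nonvanishing field. The straightening (flow-box) theorem, which follows from smooth dependence of ODE flows on initial conditions and the inverse function theorem, gives coordinates in which this field is $\partial/\partial y^n$.

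For general $m$, choose a local frame $X_1,\dots,X_m$ of $D$ and straighten $X_1$ to $\partial/\partial y^1$. Replace $X_j$ for $j\ge2$ by $X_j - X_j(y^1)X_1$, so that $X_j(y^1)=0$ for $j\ge 2$. Involutivity then gives $[\partial_1, X_j] = \sum_{l\ge 2} c^l_j X_l$; the $X_1$-component vanishes because both sides annihilate $y^1$. Restrict $X_2,\dots,X_m$ to the slice $\{y^1=0\}$. They are tangent to it and involutive there, so by induction there are slice coordinates $(z^2,\dots,z^n)$ that straighten them. Extend these coordinates to be constant along the flow of $\partial_1$.

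The linear ODE satisfied by the components of $X_j$ along $\partial_1$-lines then shows that $dz^{i}(X_j)=0$ for the transverse coordinates persists off the slice. Hence the level sets of these $k$ coordinates are integral manifolds, and this yields $\Phi$. We expect this ODE propagation argument to be the main obstacle. An alternative route is to choose the frame so that the fields commute, namely $X_j = \partial_{x^j} + \sum_{i\le k} a^i_j\partial_{x^i}$ in graph coordinates, where involutivity forces $[X_j,X_l]=0$. Then the map $(t,s)\mapsto \phi^{X_1}_{t_1}\circ\cdots\circ\phi^{X_m}_{t_m}(s)$ is a diffeomorphism by the inverse function theorem at $0$, and it is the required chart. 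We will likely use this commuting-flows route, since it avoids the ODE bookkeeping.
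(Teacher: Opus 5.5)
The paper does not prove this theorem. It quotes it as classical background from Warner \cite{Warner1983}, so there is no internal argument to compare yours against. Your proposal is correct. Your primary route is essentially the standard textbook proof found in that reference: induction on the rank $m=n-k$, normalizing the frame to $X_j - X_j(y^1)X_1$, invoking the inductive hypothesis on the slice $\{y^1=0\}$, and propagating off the slice by a linear ODE. Your necessity computation $A = dg\,g^{-1}$ is also right, as is the forms-versus-brackets equivalence via $d\alpha(f_1,f_2)=f_1\alpha(f_2)-f_2\alpha(f_1)-\alpha([f_1,f_2])$ and a completed coframe.

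The step you flag as the main obstacle is actually short. Let $z^a$ ($a>m$) be the transverse slice coordinates, extended to be independent of $y^1$, and set $h^a_j := X_j(z^a)$ for $j\ge 2$. Then
\[
\partial_1 h^a_j = [\partial_1,X_j](z^a) + X_j(\partial_1 z^a) = \sum_{l\ge 2} c^l_j\, h^a_l .
\]
On $\{y^1=0\}$ we have $h^a_j=0$, because $X_j$ is tangent to the slice there and is straightened by the inductive chart. Uniqueness for linear ODEs then gives $h^a_j\equiv 0$. Together with $X_1(z^a)=\partial_1 z^a=0$, this shows $D\subseteq\bigcap_a \ker dz^a$, with equality by dimension. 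The only care needed is to work on a cube $(-\epsilon,\epsilon)\times W$ in the $y$-chart, so that every $\partial_1$-line meets the slice.

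If you switch to the commuting-frame route, you need one ingredient you did not state: commuting vector fields have commuting flows. This is what makes $\psi(t,s)=\phi^{X_1}_{t_1}\circ\cdots\circ\phi^{X_m}_{t_m}(s)$ satisfy $\partial_{t_j}\psi = X_j\circ\psi$ at every point, not just at the origin. The inverse function theorem alone only gives a local diffeomorphism, not one adapted to $D$. Also align your indices there: in $X_j=\partial_{x^j}+\sum_{i\le k}a^i_j\partial_{x^i}$ the index $j$ runs over $k+1,\dots,n$, and $s$ ranges over a $k$-dimensional slice transverse to $D_{x_0}$.

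As a comparison of the two routes: the induction needs only the one-field flow box and linear ODE uniqueness. The commuting-frame route avoids the induction but requires the flow-commutation lemma.
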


	The classical Frobenius theorem applies equally well to $C^r$ distributions and complete integrability by $C^r$ diffeomorphisms, for $r \ge 1$. Various generalizations to less-smooth differential forms exist; of special note is that the classical theorem can be extended to locally Lipschitz differential forms with complete integrability by bi-Lipschitz diffeomorphisms \cite{Simic1996,Rampazzo2007}, in which case the forms $A^i_j$ need only be locally bounded and the requirement that $d\alpha^i = A^i_j \wedge \alpha^j$ is weakened to only be true almost everywhere. The Chow-Rashevskii theorem has also been proved for locally Lipschitz distributions \cite{RampazzoSussman2001}.
	
	The present paper focuses on the following setting: suppose $D = \ker \alpha$ is piecewise-$C^1$ but not necessarily continuous on the smooth manifold $M$. We seek sufficient conditions for the integrability of $D$ by continuous piecewise-$C^1$ diffeomorphisms.

	It is trivial to show that in this setting, involutivity almost everywhere is not sufficient for complete integrability. Even set-involutivity (per the definitions in \cite{Rampazzo2007}) is insufficient. More assumptions need to be added to ensure that the relevant ordinary differential equations used to construct the integral manifolds actually have solutions. Here, the benefit of using the formulation of the Frobenius theorem in terms of differential forms becomes clear, as it is easier to clearly state conditions regarding the annihilator $\alpha$ rather than $D$ itself.

	The question of which conditions should be added is subtle and dependent on the intended application. Theorem \ref{mainthrm2} adds a condition that is quite general and topological in nature, but excludes some simple cases of interest, while Theorem \ref{mainthrm1} adds a condition that is analytical in nature and can be productively applied in some special cases, especially if all of the one-forms in $\alpha$ are piecewise-closed. Both rely on Lemma \ref{mainlemma}, which applies a generalized inverse function theorem due to Clarke \cite{Clarke1976}.

	\section{Notation and Conventions}

	Before stating the theorems of this paper, it will be helpful to fix some notation and conventions.
	
	A \emph{polyhedral mesh} $\mathcal{T}$ on the manifold $M$ (which may have polyhedral boundary) is a set of subsets $\Delta \subset M$ which cover $M$, such that each set $\Delta$ is the image of a closed convex $n-d$-dimensional polytope $\hat\Delta \subset \mathbb{R}^{n-d}$ under a smooth embedding $f_\Delta: \mathbb{R}^{n-d} \to M$. As an abuse of notation, the set $\Delta$ is called a polytope of the mesh, or sometimes just a polytope, of codimension $d$. The image of each face of $\hat\Delta$ is also a polytope of the mesh, which will abusively be called a face of $\Delta$, and the intersection of any two polytopes $\Delta,\Delta'$ of the mesh must be either empty or equal to a face of both. Top-dimensional polytopes of the mesh, or polytopes of codimension zero, will be labeled $T$ to distinguish their importance.

	The interior of a polytope, denoted $\mathring{\Delta}$, is the image of the interior of $\hat{\Delta}$ under $f_\Delta$, where the interior of $\hat{\Delta}$ consists of all points $ta_1 + (1-t)a_2$ with $a_1,a_2 \in \Delta$ and $t \in (0,1)$.
	
	We will always assume that $D$ is the kernel of a Pfaffian system $\alpha$ which is piecewise-$C^1$ with respect to the polyhedral mesh $\mathcal{T}$, i.e. its restriction to each top-dimensional polytope is continuously differentiable. For each top-dimensional polytope $T$, let $\alpha_T$ be $\alpha$ restricted to $T$. A map $\Phi: U \subset M \to \mathbb{R}^n$ is a continuous piecewise-$C^1$ diffeomorphism onto its image if $\Phi$ is a one-to-one open map and $\Phi|_T$ is a $C^1$ embedding for each $T \subset M$. The Pfaffian system $\alpha$ is trivializeable by continuous piecewise-$C^1$ diffeomorphisms at $x_0 \in M$ if there exists an open set $U \ni x_0$ and a map $\Phi:U \to \mathbb{R}^n$ which is a continuous piecewise-$C^1$ diffeomorphism onto its image and $\alpha_T^i = \Phi|_T^*dy^i$ for each $T$ containing $x_0$. The distribution $D = \ker \alpha$ is completely integrable by continuous piecewise-$C^1$ diffeorphisms at $x_0$ if it has a piecewise-$C^1$ annihilator that is trivializeable by continuous piecewise-$C^1$ diffeomorphisms at $x_0$.

	\section{The Main Lemma}

	The next lemma establishes a basic criterion for $\alpha$ to be trivializeable by a continuous piecewise-$C^1$ diffeomorphism.

	\begin{lemma}\label{mainlemma}
		Let $x_0 \in M$ and suppose there exists an open set $U$ containing $x_0$ and a set of maps $\Phi^i: U \to \mathbb{R}$ which are each continuous and piecewise-$C^1$, and $d\Phi^i_T = \alpha^i_T$ for each $i,T$.

		If there exists a subspace $V \subset T_{x_0}M$ such that for each set of nonnegative weights $\{c^T\}_{T \ni x_0}$ satisfying $\sum_T c^T = 1$, $\sum_T c^T \alpha_T|_{x_0}$ has full rank when restricted to $V$, then $\alpha$ is trivializeable by piecewise-$C^1$ diffeomorphisms at $x_0$.
	\end{lemma}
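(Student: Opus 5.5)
We plan to build the trivializing map by hand from the given primitives $\Phi^i$ and then show it is a local homeomorphism using Clarke's inverse function theorem for Lipschitz maps \cite{Clarke1976}. The $\Phi^i$ give the first $k$ coordinates. The remaining $n-k$ coordinates will be taken as smooth functions whose differentials at $x_0$ span a complement to $V^{\perp}$. Then $\Phi|_T^* dy^i = \alpha^i_T$ for $i \le k$ holds by hypothesis. All that is left is to show that $\Phi = (\Phi^1,\dots,\Phi^k,\psi^{k+1},\dots,\psi^n)$ is a one-to-one open map near $x_0$, with each $\Phi|_T$ a $C^1$ embedding.

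\textbf{Step 1: choice of the extra coordinates.} Work in a chart around $x_0$, so that $U$ is an open subset of $\mathbb{R}^n$. We have $\dim V \ge k$, since full rank on $V$ means rank $k$. Choose $W \subset V$ of dimension $k$ on which every convex combination $\sum_T c^T\alpha_T|_{x_0}$ is injective. Fix a complement $W'$ of $W$ in $T_{x_0}M$, and let $\psi^{k+1},\dots,\psi^n$ be linear functions that vanish on $W$ and restrict to a basis of $(W')^*$. Subtract constants so that $\Phi(x_0)=0$.

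\textbf{Step 2: Clarke's generalized Jacobian.} The map $\Phi$ is continuous and $C^1$ on each of the finitely many polytopes $T$ meeting a neighbourhood of $x_0$, so it is locally Lipschitz. Its Clarke generalized Jacobian at $x_0$ is contained in the convex hull of the limiting Jacobians $d(\Phi|_T)(x_0)$ over $T \ni x_0$. This holds because near $x_0$ almost every point lies in the interior of some $T$ containing $x_0$, and the $C^1$-up-to-boundary property makes the limits equal to $d(\Phi|_T)(x_0)$. A general element of the hull is therefore
\[
L = \begin{pmatrix} \sum_T c^T\alpha_T|_{x_0} \\ d\psi|_{x_0}\end{pmatrix}.
\]
Suppose $Lv=0$. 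The lower block forces $v\in W$, and then injectivity of $\sum c^T\alpha_T|_{x_0}$ on $W$ forces $v=0$. Hence every element of the generalized Jacobian is invertible. Clarke's theorem then gives neighbourhoods on which $\Phi$ is a bi-Lipschitz homeomorphism onto an open set. This yields the one-to-one and open properties.

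\textbf{Step 3: embeddings on each $T$.} Each $\Phi|_T$ is $C^1$, and its differential $d(\Phi|_T)(x_0)$ is the case $c^T=1$ of Step 2, so it is invertible. By continuity it stays invertible near $x_0$. Together with global injectivity, this makes $\Phi|_T$ a $C^1$ embedding after shrinking $U$.

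The main obstacle is Step 2. We must justify rigorously that the generalized Jacobian of a continuous piecewise-$C^1$ map lies in the convex hull of the piece Jacobians. To do this we would use Clarke's characterization of the generalized Jacobian as the convex hull of limits of $D\Phi(x_j)$ along sequences of differentiability points $x_j \to x_0$. We also need to handle points on lower-dimensional faces, which have measure zero and can be avoided. A secondary subtlety is that the hypothesis only gives full rank for weights over polytopes containing $x_0$. This is exactly the set of limits that matters, and we handle it by shrinking $U$ so that it meets only polytopes containing $x_0$.
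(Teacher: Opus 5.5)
Your construction is the paper's. Compose the paper's map $P_{V^\perp}+f_i\Phi^i$ with the linear isomorphism $y\mapsto(\tilde\alpha(y),\ \text{coordinates of }P_{V^\perp}y)$. This gives exactly your $(\Phi^1,\dots,\Phi^k,\psi)$ with $W=V$ and $W'=V^\perp$. Your Clarke/block-matrix argument is also the paper's, and your Step~3 and ``contained in'' formulation are, if anything, more careful.

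When $\dim V=k$, ``full rank on $V$'' means injective, $W=V$ works, and your proof is complete. This is the case the paper's proof actually treats: it takes $f_1,\dots,f_k$ spanning $V$ and uses their linear independence. It is also what condition~(\ref{cond3}) of Theorem~\ref{mainthrm1} states.

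The gap is the extra generality you build into Step~1. You read ``full rank'' as rank $k$ and allow $\dim V>k$. You then assert, without argument, that a single $k$-plane $W\subset V$ exists on which \emph{every} $\sum_T c^T\alpha_T|_{x_0}$ is injective. Surjectivity only gives such a plane for each $c$ separately. For $k=1$ a common one does exist, by separating $0$ from the compact convex set $\{\sum_T c^T\alpha_T|_{x_0}|_V\}\subset V^*$. For $k\ge2$ the claim is false.

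Take $V=\mathbb{R}^2_x\times\mathbb{R}_s$, $\theta_j=2\pi j/3$, $u_j=(\cos\theta_j,\sin\theta_j)$, $u_j^\perp=(-\sin\theta_j,\cos\theta_j)$, and
\[
L_j(x,s)=(u_j^{\perp}\cdot x)\,u_j^{\perp}+s\,u_j,\qquad j=0,1,2 .
\]
Every convex combination $L_c$ is onto $\mathbb{R}^2$. With at least two positive weights, $x\mapsto\sum_j c^j(u_j^\perp\cdot x)u_j^\perp$ is positive definite. At a vertex, $L_j$ hits both $u_j^\perp$ and $u_j$.

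The cross product of the rows of $L_j$ spans $\ker L_j$ and equals $-(u_j,0)$. For $j=0$, $L_0(x,s)=(s,x_2)$ has rows $e_3,e_2$, and $e_3\times e_2=-e_1$; the other two cases follow by rotation. These three vectors sum to $0$.

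Now fix any plane $W=n^\perp$ and let $g(c)=n\cdot(a_c\times b_c)$, where $a_c,b_c$ are the rows of $L_c$. The vertex values of $g$ sum to $0$, so $g$ vanishes somewhere on the simplex. At such a $c$ we have $\ker L_c\subset W$, so $L_c|_W$ is not injective.

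Choosing $\psi$ differently cannot repair this. A smooth completion enters Clarke's Jacobian only through $d\psi|_{x_0}$. So invertibility of all its elements is equivalent to $\ker d\psi|_{x_0}$ being a $k$-plane on which every $\sum_T c^T\alpha_T|_{x_0}$ is injective. With $V=T_{x_0}M$, as in the example, no such plane exists.

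So either state and prove the lemma with $\dim V=k$, as the paper does, or find a genuinely different argument for $\dim V>k$. Nothing in your proposal uses continuity of the $\Phi^i$ to rule out configurations like this one.
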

	\begin{proof}
		Let $x_0 \in M$, and  let $f_1,\dots,f_k$ be vectors that span $V$. Let $U$ be a simply connected coordinate neighborhood of $x_0$ which is small enough that every polytope that intersects $U$ contains $x_0$. Using these coordinates, $f_1,\dots,f_k$ can be thought of as fixed elements of $\mathbb{R}^n$.
		Define $\Phi_T(x) := P_{V^\perp}(x) + f_i \Phi^i_T(x)$, where $P_{V^\perp}$ is the orthogonal projection onto $V^\perp$ (in the coordinates on $U$). The proof works by showing that on a small enough open set containing $x_0$, the map $\Phi(x) := \Phi_T(x)$ for all $x \in T$ is bi-Lipschitz, and hence is a piecewise-$C^1$ diffeomorphism onto its image.

		We will use a generalization of the inverse function theorem for Lipschitz maps:
		\begin{theorem}{\cite{Clarke1976}}\label{inversefunctionthrm}			Suppose $f: U \to \mathbb{R}^n$ is a Lipschitz map, and let $df|_{x_0}$ be the convex hull of the values $\lim_{n \to \infty}df|_{x_n}$, for all sequences $(x_n)_{n \in \mathbb{N}}$ that converge to $x_0$ and such that $f$ is differentiable at $x_n$. If every element of $df|_{x_0}$ is invertible, then there exists an open set $U' \subset U$ such that $f$ is bi-Lipschitz on $U'$.
		\end{theorem}
		
		In this case, $d\Phi|_{x_0}$ is the convex hull of $\{d\Phi_T|_{x_0}\}_{T \ni x_0}$. Assume $u$ is a vector in $T_{x_0}M$ and $\sum_{T \ni x_0} c^Td\Phi_T|_{x_0}(u) = 0$ for some set of weights with $\sum_{T \ni x_0} c^T = 1$ and $0 \le c^T \le 1$. Let $u = v + w$, where $v \in V$ and $w \in V^\perp$. Then:
		
		\begin{align*}\sum_{T\ni x_0} c^T[P_{V^\perp} + f_i d\Phi^i_T|_{x_0}] (v + w) &= 0\\
			\sum_{T\ni x_0}c^T[w + f_i\alpha^i_T(v + w)] &= 0 \\
			w + f_i\sum_{T\ni x_0} c^T\alpha^i_T(v+w) &= 0
		\end{align*}
		Since $f_i \in V$ and the vectors $f_1,\dots,f_k$ are linearly independent, the last line is equivalent to $w = 0$ and $\sum_{T \ni x_0} c^T \alpha^i_T(v) = 0$ for all $i$. However, the assumptions of the theorem imply that in this case, $v = 0$. Therefore, $u = 0$.
		By Theorem \ref{inversefunctionthrm}, $\Phi$ is a bi-Lipschitz map when restricted to some open set $U' \subset U$ containing $x_0$. Let $\tilde{\alpha}^i$ be defined by $\tilde{\alpha}^i(f_j) = \delta^i_j$ and $\tilde{\alpha}^i(w) = 0 \forall w \in V^\perp$. Then $\Phi_T^*\tilde{\alpha}^i = d\Phi^i_T$, and $\tilde{\alpha} = (\tilde{\alpha}^1,\dots,\tilde{\alpha}^k)$ is trivializeable by a linear isomorphism on $\Phi(U')$ because it is constant, which immediately implies $\alpha = \Phi^*\tilde\alpha$ is trivializeable by continuous piecewise-$C^1$ diffeomorphisms.
	\end{proof}

	Before stating the main theorems, it is helpful to consider a necessary condition.

	\begin{proposition}
	Suppose $D = \ker \alpha$ is completely integrable by continuous piecewise-$C^1$ diffeomorphisms at $x_0$. Then for each polytope $\Delta$ containing $x_0$, and each pair of top-dimensional polytopes $T,T'$ both containing $\Delta$ as a face, then on an open set $U$ containing $x_0$, $i_\Delta^* \alpha_T = H_{T,T'} \cdot i_\Delta^* \alpha_{T'}$ for a unique smooth map $H_{T,T'}: \Delta \cap U \to GL(k)$.
	\end{proposition}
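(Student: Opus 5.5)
The plan is to unwind the definition of complete integrability and compare the restrictions of the trivializing map from the two sides of $\Delta$. By assumption, $D=\ker\alpha$ has some piecewise-$C^1$ annihilator $\beta=(\beta^1,\dots,\beta^k)$ that is trivializeable at $x_0$. So there are an open set $U\ni x_0$ and a continuous piecewise-$C^1$ diffeomorphism $\Phi:U\to\mathbb{R}^n$ with $\beta^i_T=\Phi|_T^*dy^i$ for every $T\ni x_0$. Shrinking $U$ as in the proof of Lemma \ref{mainlemma}, I will assume every polytope meeting $U$ contains $x_0$.

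\textbf{Step 1: relate $\alpha$ to $\beta$ on each top-dimensional polytope.} Since $\alpha_T$ and $\beta_T$ both annihilate $D|_T$ and each has full rank $k$ there, each row of $\alpha_T$ lies in the span of the rows of $\beta_T$. Hence $\alpha_T=G_T\cdot\beta_T$ for a unique map $G_T:T\cap U\to GL(k)$. Writing $G_T$ by pairing $\alpha_T$ with a local frame dual to $\beta_T$ shows that $G_T$ is as regular as $\alpha_T$ and $\beta_T$, namely $C^0$ at least and $C^1$ in the interior.

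\textbf{Step 2: use continuity of $\Phi$ across $\Delta$.} Because $\Phi$ is continuous, $\Phi|_T$ and $\Phi|_{T'}$ agree on $\Delta\cap U\subset T\cap T'$. Each $\Phi^i|_T$ is $C^1$ up to the boundary of $T$, so pulling back along the inclusion $i_\Delta$ gives
\[
i_\Delta^*\beta^i_T=d(\Phi^i|_T\circ i_\Delta)=d(\Phi^i|_{T'}\circ i_\Delta)=i_\Delta^*\beta^i_{T'}.
\]
Combining this with Step 1,
\[
i_\Delta^*\alpha_T=(G_T|_\Delta)\,i_\Delta^*\beta_T=(G_T|_\Delta)(G_{T'}|_\Delta)^{-1}\,i_\Delta^*\alpha_{T'}.
\]
So the natural candidate is $H_{T,T'}:=G_TG_{T'}^{-1}$ restricted to $\Delta\cap U$.

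\textbf{Step 3: uniqueness, the main obstacle.} Uniqueness of $H_{T,T'}$ requires that the $k$ pulled-back forms $i_\Delta^*\alpha_{T'}$ be linearly independent at each point of $\Delta$. This can fail if $T\Delta$ meets $\ker\alpha_{T'}$ in too large a subspace, for instance when $\dim\Delta<k$.

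For the top-codimension faces I would argue as follows. The map $\Phi|_{T'}\circ i_\Delta$ is a $C^1$ embedding of $\Delta$ into $\mathbb{R}^n$, because $\Phi|_{T'}$ is a $C^1$ embedding. Its first $k$ coordinates have differentials $i_\Delta^*\beta_{T'}$, and I would try to show that they have full rank by choosing the splitting of coordinates generically. Where this genuinely fails, I would read the statement as asserting uniqueness of $H_{T,T'}$ as the restriction of the transition $G_TG_{T'}^{-1}$. That transition is canonically determined by $\alpha$ and the chosen trivialization, and I would check that it is independent of the trivialization because the $\beta$'s on $T$ and $T'$ differ only by the same constant-on-$\Delta$ ambiguity.

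Smoothness of $H$ on $\Delta\cap U$ comes from the regularity of $G_T$ and $G_{T'}$ up to the boundary; in this setting it is $C^0$, or $C^1$ where the data are.
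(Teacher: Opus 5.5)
\textbf{Existence.} Your existence argument is correct, and it is somewhat more robust than the paper's. The paper uses continuity of $\Phi$ only to get $\ker i_\Delta^*\alpha_T=\ker i_\Delta^*\alpha_{T'}$. It then sets $H=A_1A_2^\dagger$, with $A_1=i_\Delta^*\alpha_T$, $A_2=i_\Delta^*\alpha_{T'}$ and $A_2^\dagger$ a right inverse, which needs $A_2$ to be onto $\mathbb{R}^k$. Your $H=G_TG_{T'}^{-1}$, built from $\alpha_T=G_T\beta_T$ and $i_\Delta^*\beta_T=i_\Delta^*\beta_{T'}$, exists with no rank assumption on the pullbacks.

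\textbf{The gap is Step 3.} You correctly observe that uniqueness needs $i_\Delta^*\alpha_{T'}$ to have rank $k$, but neither of your remedies works.
\begin{itemize}
\item At $x\in\Delta$, $H(x)$ is unique iff $i_\Delta^*\alpha_{T'}|_x:T_x\Delta\to\mathbb{R}^k$ is onto. Its rank is $\dim\Delta-\dim(T_x\Delta\cap\ker\alpha_{T'}|_x)$. This depends only on $\alpha$ and $\Delta$, and it equals $k$ exactly when $T_x\Delta+\ker\alpha_{T'}|_x=T_xM$.
\item No ``generic splitting'' can raise this rank, because coordinates whose pullbacks annihilate $D$ must be combinations of $y^1,\dots,y^k$.
\item Your fallback is also false. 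If $\beta'_T=K_T\beta_T$ comes from another trivialization, continuity only gives $(K_T-K_{T'})\,i_\Delta^*\beta_T=0$. That forces $K_T=K_{T'}$ only when $i_\Delta^*\beta_T$ is onto.
\end{itemize}

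\textbf{Counterexample.} Take $\mathbb{R}^2$ with the quadrant mesh of Example~\ref{example1} and $\alpha=dy$. Two admissible trivializations are $\Phi(x,y)=(y,x)$, and the map $\Phi'$ equal to $(y,x)$ for $y\ge 0$ and $(2y,x)$ for $y\le 0$; the latter trivializes $\beta'=dy$ above the $x$-axis and $2\,dy$ below. On the positive $x$-axis shared by $T_1$ and $T_4$ they give $G_{T_1}G_{T_4}^{-1}=1$ and $2$ respectively. Meanwhile every $H$ satisfies $i_\Delta^*\alpha_{T_1}=0=H\cdot i_\Delta^*\alpha_{T_4}$ there. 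The same failure occurs at any vertex $\Delta=\{x_0\}$.

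\textbf{What should replace Step 3.} Without a transversality hypothesis, the uniqueness clause is simply false, and no canonical choice rescues it. The paper's proof tacitly supplies that hypothesis by applying its linear-algebra lemma to surjective $A_1,A_2$. Once $i_\Delta^*\alpha_{T'}$ has rank $k$ on $T\Delta$, uniqueness is one line: $(H-H')\,i_\Delta^*\alpha_{T'}=0$ with a surjective right factor forces $H=H'$. That argument, with the hypothesis stated explicitly, is what Step 3 should be.

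\textbf{Regularity.} Separately, you undersell regularity. The definition requires the trivialized annihilator $\beta$ to be piecewise-$C^1$, so $\beta_T$ is $C^1$ on the closed polytope $T$. Hence $G_T$ and $H=G_TG_{T'}^{-1}$ are $C^1$ on $\Delta\cap U$. That matches what the paper actually proves (it gets $C^1$, not $C^\infty$, from $A_1A_2^\dagger$), whereas your write-up stops at $C^0$.
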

	\begin{proof}
	Observe that if $\Phi: U \to \mathbb{R}^n$ is continuous and piecewise-$C^1$, $d\Phi$ is single-valued on $T\Delta$, so $D \cap T\Delta = \ker i_\Delta^* \alpha$ is single-valued on $U \cap \Delta$. The proposition now follows from a basic linear algebra argument: if $V_1,V_2$ are two finite-dimensional vector fields and  $A_1,A_2: V_1 \to V_2$ are two surjective linear operators, such that $\ker A_1 = \ker A_2$, then there exists a unique operator $B \in \mathrm{Aut}(V_2)$ such that $A_1 = B A_2$.

	$B$ is simply $A_1 A_2^\dagger$, where $A_2^\dagger$ is any right-inverse of $A_2$. Let $W = A_2^\dagger(V_2)$, then every vector $v \in V_1$ can be written uniqely as $A_2^\dagger(v') + v''$, where $v' \in V_2$ and $v'' \in \ker A_1 = \ker A_2$. Then $A_1(v) = A_1A_2^\dagger(v')$, and $B A_2(v) =  A_1A_2^\dagger A_2 A_2^\dagger(v') = A_1A_2^\dagger(v')$. $B$ has full rank because $A_1$ and $A_2$ have full rank, and $B$ is unique because if $(B - B')A_2 = 0$ then $B^{-1}B' A_2 = A_2$, which implies $B^{-1}B' = I$ since $A_2$ is surjective.

	Lastly we simply note that $H_{T,T'}$ must be continuously differentiable, because $i_\Delta^*\alpha_T, i_\Delta^*\alpha_{T'}$ are both $C^1$ on $U \cap \Delta$.
	\end{proof}
	The integrability conditions presented below are both really sufficient conditions for there to exist a piecewise-$C^1$ map $H: U \to GL(k)$ such that $(H \cdot \alpha)^i = d\Phi^i$ for some functions $\Phi^1,\dots,\Phi^k$, and the functions $\Phi^i$ satisfy the conditions of the above lemma.

	For the first set of sufficient conditions, we require that $H_{T,T'} = I$ for each $T,T'$.

	\section{The First Set of Conditions}

	\begin{theorem}\label{mainthrm1}
		Suppose the following conditions hold:
		\begin{enumerate}
			\item There exists an open neighborhood $U$ of $x_0$ such that, for each $T \ni x_0$, $\alpha_T$ can be extended to a $C^1$ Pfaffian system on $U$, and there exists a smooth map $A_T: U \to \mathfrak{gl}(k)$ such that $d\alpha_T^i = -{A_T}^i_j \wedge \alpha_T^j$ for all $i$ in $1,\dots,k$ and $d{A_T}^i_j + {A_T}^i_l \wedge {A_T}^l_j = 0$ for all $i,j$ in $1,\dots,k$, \label{cond1}\\
			\item if $\Delta$ any polytope containing $x_0$ and $T,T'$ both contain $\Delta$ as a face, then $i_{\Delta}^*\alpha_T = i_{\Delta}^*\alpha_{T'}$ and $i_{\Delta}^*A_T = i_{\Delta}^*A_{T'}$ on $U \cap \Delta$, \label{cond2}\\
			\item and there exists a $k$-dimensional subspace $V \subset T_{x_0}M$ such that for any set of non-negative weights $\{c^T\}_{T \ni x_0}$ satisfying $\sum_T c^T = 1$, $\sum_T c^T \alpha_T|_{x_0}$ has full rank when restricted to $V$. \label{cond3}
		\end{enumerate}
	\end{theorem}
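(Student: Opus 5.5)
The conclusion is that $D=\ker\alpha$ is completely integrable by continuous piecewise-$C^1$ diffeomorphisms at $x_0$. The plan is to build, on each top-dimensional polytope $T\ni x_0$, a $C^1$ gauge $H_T:U\to GL(k)$ with $H_T\cdot\alpha_T$ closed. Primitives of these closed forms then give functions $\Phi^i_T$. The remaining work is to make both $H_T$ and $\Phi^i_T$ agree across common faces, so that they glue to continuous piecewise-$C^1$ objects, and then to invoke Lemma \ref{mainlemma}.

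\textbf{Step 1: construct the gauges $H_T$.} Shrink $U$ to a star-shaped coordinate ball around $x_0$, small enough that every polytope meeting $U$ contains $x_0$.

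1. Condition \ref{cond1} says that $A_T$ is a flat $\mathfrak{gl}(k)$-valued connection form.
2. Consider the equation $dH_T = H_T A_T$ with $H_T(x_0)=I$. Its integrability condition is $d(HA)=H(A\wedge A+dA)=0$, which holds by flatness.
3. Concretely, take $H_T(x)$ to be the solution of the linear ODE along the radial segment from $x_0$ to $x$. This is the standard proof of the Poincaré lemma for flat connections, and it gives a $C^1$ map $H_T$ into $GL(k)$ on $U$.
4. Put $\beta_T := H_T\alpha_T$. Then
\[d\beta_T = dH_T\wedge\alpha_T + H_T\,d\alpha_T = H_TA_T\wedge\alpha_T - H_TA_T\wedge\alpha_T = 0,\]
so each $\beta_T$ is closed.
5. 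Define $\Phi_T^i(x)=\int_{[x_0,x]}\beta^i_T$, again along the radial segment. Then $d\Phi^i_T=\beta^i_T$ on $U$.

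\textbf{Step 2: gluing.} Let $\Delta\ni x_0$ be a common face of $T$ and $T'$. Since $U$ is a small ball about $x_0$, the polytope $\Delta\cap U$ is star-shaped with respect to $x_0$ in suitable coordinates. I would use a chart in which the mesh is locally conical at $x_0$; otherwise, replace straight segments by paths inside $\Delta$, which is legitimate because both $A_T$ and $\beta_T$ are flat or closed.

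Then the radial ODE for $H_T$ at a point of $\Delta$ uses only $i_\Delta^*A_T$. By condition \ref{cond2} this equals $i_\Delta^*A_{T'}$, so uniqueness of ODE solutions gives $H_T=H_{T'}$ on $\Delta\cap U$. Combining this with $i_\Delta^*\alpha_T=i_\Delta^*\alpha_{T'}$ gives $i_\Delta^*\beta_T=i_\Delta^*\beta_{T'}$. The integral defining $\Phi_T$ at points of $\Delta$ also sees only $i_\Delta^*\beta_T$, so $\Phi_T=\Phi_{T'}$ on $\Delta$.

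The pieces therefore define a continuous piecewise-$C^1$ map $H$ and continuous piecewise-$C^1$ functions $\Phi^i$ with $d\Phi^i_T=(H\alpha)^i_T$.

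\textbf{Step 3: apply Lemma \ref{mainlemma}.} Apply the lemma to the annihilator $\tilde\alpha=H\alpha$, which has the same kernel $D$. Since $H_T(x_0)=I$ for every $T$, we have $\tilde\alpha_T|_{x_0}=\alpha_T|_{x_0}$. Hence condition \ref{cond3} is exactly the hypothesis of the lemma. The lemma then shows that $\tilde\alpha$ is trivializeable by continuous piecewise-$C^1$ diffeomorphisms at $x_0$, so $D$ is completely integrable in that sense.

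\textbf{Main obstacle.} I expect the hard step to be the gluing in Step 2, that is, making the paths used to integrate $H_T$ and $\Phi_T$ stay inside each shared face. For points of $\Delta$, I would first try to deform the radial segment into a path in $\Delta\cap U$ from $x_0$. Flatness of $A_T$ and closedness of $\beta_T$ on the simply connected set $U$ make the result path-independent. Condition \ref{cond2} then forces agreement along such a path. This requires $\Delta\cap U$ to be path-connected through $x_0$, which holds because $\Delta$ is the image of a convex polytope containing $x_0$ and $U$ is a small ball around it.
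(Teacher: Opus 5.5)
Your proposal is correct and follows the paper's proof essentially step for step. You build a gauge $H_T$ with $dH_T=H_TA_T$ and $H_T(x_0)=I$; the paper cites the fundamental theorem of calculus for Lie groups where you integrate a radial ODE. You then take primitives $\Phi^i_T$ of the closed forms $(H_T\alpha_T)^i$, glue across shared faces via condition 2 along paths inside $\Delta$, and apply Lemma \ref{mainlemma} to $H\alpha$ using $H(x_0)=I$. Your explicit ODE-uniqueness argument along paths in $\Delta$ for $H_T=H_{T'}$ fills in a step the paper only asserts.
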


	Note that condition (\ref{cond1}) is not equivalent to the statement that $D$ is involutive on $T \cap U$. Involutivity on $T \cap U$ is only enough to guarantee complete integrability on $\mathring{T} \cap U$, so we require that $D|_T$ to be a restriction of an involutive distribution to $T \cap U$. Note also that condition (\ref{cond1}) is strictly stronger than involutivity in another way, because we require that $A_T$ is smooth and satisfies the structure equations of $GL(k)$. This is essentially an artifact of the proof, but taken together with (\ref{cond2}), this condition means we are only detecting Pfaffian systems of the form $\alpha^i = (H^{-1} \cdot \tilde{\alpha})^i$, where $\tilde{\alpha}$ is trivializeable by continuous piecewise-$C^1$ diffeomorphisms and the map $H: U \to GL(k)$ is continuous piecewise-smooth, while complete integrability is satisfied if $H$ is merely piecewise-$C^1$. Note that for piecewise-closed forms, $A_T = 0$ is always a valid choice and condition (\ref{cond1}) is always satisfied.

	\begin{proof}[proof of Theorem \ref{mainthrm1}]
		Let $A_T$ be the $\mathfrak{gl}(k)$-valued one-form assumed to exist by condition (\ref{cond1}). Since $A_T$ satisfies $dA_T + A_T \wedge A_T = 0$, the Fundamental Theorem of Calculus for Lie groups (see \cite[pp. 116]{Sharpe2000}), itself an application of the smooth Frobenius theorem, implies that $A_T = H_T^{-1}dH_T$ for a unique smooth map $H_T: U \to GL(k)$ such that $H_T(x_0) = I$. Since $i_{\Delta}^*A$ is single-valued for any shared face $\Delta = T \cap T'$ by condition (\ref{cond2}), it is also true that $H_T(x) = H_{T'}(x)$ for any $x \in T \cap T'$, so the collection of $H_T$'s fit together to a continuous piecewise-smooth map $H$.

		Next observe that by condition (\ref{cond1}), for each $i$ and each $T$,

		\[d(H_T \cdot \alpha_T)^i = d{H_T}^i_j \wedge \alpha_T^j - {H_T}^i_j ({H_T}^{-1}d{H_T})^j_l \wedge \alpha_T^l = 0,\]
		in other words $(H_T \cdot \alpha_T)^i$ is a closed differential one-form. Because $U$ is simply connected, this implies by de Rham's theorem that $(H_T \cdot \alpha_T)^i$ is an exact one-form on $U$, so there exists a map $\Phi_T^i: U \to \mathbb{R}^n$ such that $d\Phi_T^i = (H_T \cdot \alpha_T)^i$.  		
		Suppose $\Delta$ is a shared face of $T$ and $T'$ containing $x_0$, and let $x \in \Delta$. Then for any smooth path $\sigma: [0,1] \to \Delta$ such that $\sigma(0) = x_0$ and $\sigma(1) = x$, 
		\begin{align*}\Phi_T^i(x) - \Phi_{T'}^i(x) &= \int_0^1 \left[d\Phi^i_T(\dot{\sigma}(t)) - d\Phi^i_{T'}(\dot{\sigma}(t))\right]dt \\
		&= \int_0^1 \left[H^i_j(\sigma(t))\alpha^j_T(\dot{\sigma}(t)) - H^i_j(\sigma(t))\alpha^j_{T'}(\dot{\sigma}(t))\right]dt\\
		&= 0,\end{align*}
		where the last equality is true by condition (\ref{cond2}) and the fact that $H$ is continuous. Therefore, $\Phi_T^i(x) = \Phi_{T'}^i(x)$ for each $x \in \Delta \cap U$ and $i = 1,\dots,k$, so $\Phi$ is continuous (and hence Lipschitz) on $U$. Let $\bar\alpha_T := H^{-1} \cdot \alpha_T$. Then $\bar\alpha_T^i = d\Phi_T^i$, and since $\bar\alpha_T^i|_{x_0} = \alpha_T^i|_{x_0}$, condition (\ref{cond3}) is also satisfied for $\bar\alpha$. Therefore by Lemma (\ref{mainlemma}), $\bar\alpha$ is trivializeable by continuous peicewise-$C^1$ diffeomorphisms at $x_0$, implying that $\alpha$ is completely integrable by continuous piecewise-$C^1$ diffeomorphisms at $x_0$.
	\end{proof}
	
	\begin{example}\label{example1}
		Let $M = \mathbb{R}^2$ and let $\mathcal{T}$ be any mesh such that $0$ is the intersection of four polytopes $T_1,\dots,T_4$, which in a small enough neighborhood around $0$ segment the plane in to quadrants, so $T_1$ is the first quadrant, $T_2$ is the second quadrant, etc. Set $\alpha_{T_1} = -dx + dy$, $\alpha_{T_2} = dx + dy$, $\alpha_{T_3} = dx - dy$, and $\alpha_{T_4} = - dx - dy$. If $\Phi$ exists such that $\alpha_T = \Phi_T^*dx$ around $0$ and $\Phi(0) = 0$, then the sets $\{(x,x) : x \in \mathbb{R}\}$ and $\{(x,-x) : x \in \mathbb{R}\}$ are both tangent to $\alpha$ and contain $0$, so they must both be mapped by $\Phi$ continuously onto the set $\{(0,x) : |x| < \epsilon\}$ for sufficiently small $\epsilon > 0$. This contradicts the assertion that $\Phi$ is one-to-one.
		
		\begin{figure}
			\centering
			\includegraphics[width=0.7\linewidth]{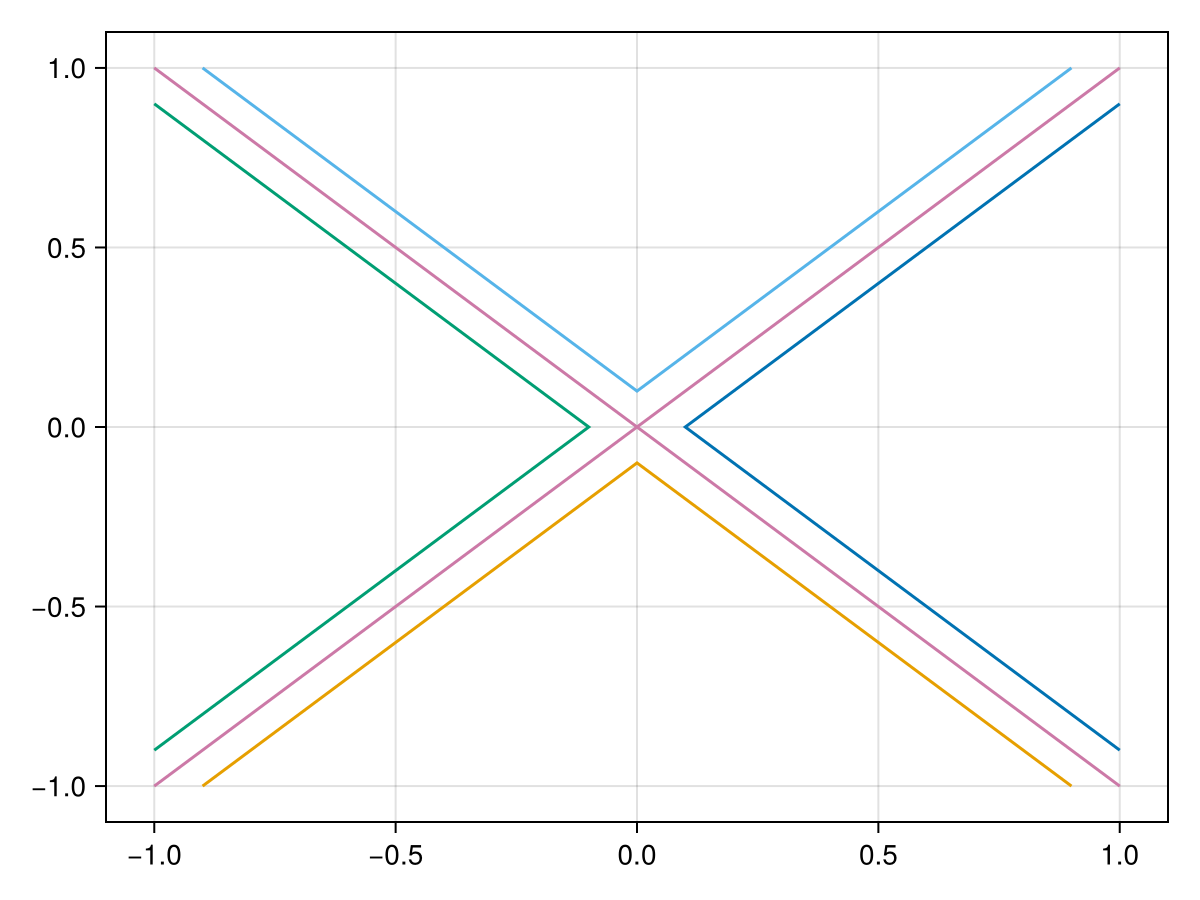}
			\caption{Graph of some of the "integral manifolds" of a distribution satisfying conditions (\ref{cond1}) and (\ref{cond2}) of Theorem \ref{mainthrm1}, but failing condition (\ref{cond3}). One of these "integral manifolds" is not a PL manifold.}
		\end{figure}
	\end{example}
	
	This example, and many like it, essentially stems from the fact that continuity is not sufficient for an ordinary differential equation to have unique solutions. The inverse function theorem as applied in Theorem \ref{mainthrm1} resolves the question.

	\section{The Second Set of Conditions}

	Next we turn to a more topological condition. The idea here is to find a special transversal to $D$ that gives enough information to construct the maps $\Phi^i$ even without the ability to construct a piecewise-smooth matrix field $H$ like in Theorem \ref{mainthrm1}.

	\begin{definition}
	A \emph{mesh transversal} to $\alpha$ through $x_0$ is a $C^1$ embedding $\gamma: [-1,1]^k \to M$ such that $\gamma(0) = x_0$, $\gamma^* \alpha_T$ has full rank for all $T$, and for each top-dimensional polytope $T$ containing $x_0$, the image of $\gamma$ intersects $T$ on a open subset of a codimension-$\le k$ face of $T$.
	\end{definition}

	The existence of a mesh transversal through $x_0$ is fairly strong. For instance, if $x_0$ lies in the interior of a polytope $\Delta_d$, where $d \le k$, then it rules out the possibility that $D|_{x_0}$ is tangent to $T_{x_0}\Delta_d$. It can also restrict the geometry of the mesh itself; for instance if $x_0$ is itself a codimension-$n$ polytope of the mesh, i.e. a vertex shared by several polytopes, there may not exist any smooth paths through that vertex that lie in the $n-1$-skeleten (the union of all polytopes of codimension $> 0$) of the mesh.
	
	\begin{theorem}\label{mainthrm2}
		Suppose the following conditions hold:
		\begin{enumerate}
			\item There exists an open neighborhood $U$ of $x_0$ such that, for each $T \ni x_0$, $\alpha_T$ can be extended to a $C^1$ Pfaffian system on $U$, and there exists a continuous map $A_T: U \to \mathfrak{gl}(k)$ such that $d\alpha_T^i = -{A_T}^i_j \wedge \alpha_T^j$ for all $i$ in $1,\dots,k$\label{cond4}\\
			\item if $\Delta$ any polytope containing $x_0$ and $T,T'$ both contain $\Delta$ as a face, then there exists a map $H_{T,T'}: U \cap \Delta \to GL(k)$ such that $i_{\Delta}^*\alpha_T = H_{T,T'} \cdot i_{\Delta}^*\alpha_{T'}$ on $U \cap \Delta$, \label{cond5}\\
			\item and there exists a mesh transversal to $\alpha$ through $x_0$. \label{cond6}
	\end{enumerate}
		Then $D = \ker \alpha$ is completely integrable by continuous piecewise-$C^1$ diffeomorphisms.
	\end{theorem}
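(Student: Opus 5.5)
The plan is to construct continuous piecewise-$C^1$ functions $\Phi^1,\dots,\Phi^k$ on a neighborhood of $x_0$ whose differentials on each $T$ are $H_T\cdot\alpha_T$ for some piecewise-$C^1$ matrix field $H$. We then apply Lemma \ref{mainlemma} to the rescaled system. The mesh transversal $\gamma$ of condition (\ref{cond6}) supplies the ``initial data''. The functions $\Phi^i$ will be defined to be constant along the leaves of the extended distributions $\ker\alpha_T$, and to equal the coordinates $\gamma^{-1}$ on the image of $\gamma$.

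\textbf{Step 1 (local foliations).} By condition (\ref{cond4}), each extension of $\alpha_T$ to $U$ is involutive on $U$: from $d\alpha_T^i = -{A_T}^i_j\wedge\alpha_T^j$ with $A_T$ continuous, the $C^1$ Frobenius theorem gives a $C^1$ submersion $\Psi_T:U'\to\mathbb{R}^k$ whose level sets are the leaves of $\ker\alpha_T$. Since $\gamma^*\alpha_T$ has full rank, $\gamma$ is transverse to these leaves. After shrinking $U'$, each leaf of $\ker\alpha_T$ meets the image of $\gamma$ in exactly one point. This lets us normalize $\Psi_T$ by requiring $\Psi_T(\gamma(s)) = s$ for $s\in[-1,1]^k$ near $0$. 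Then $d\Psi_T = H_T\cdot\alpha_T$ for a unique continuous (indeed $C^0$, piecewise-$C^1$ where needed) $H_T:U'\to GL(k)$, and $H_T(x_0)=(\gamma^*\alpha_T|_0)^{-1}$ in suitable bases.

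\textbf{Step 2 (continuity across faces).} Define $\Phi := \Psi_T$ on $T\cap U'$. We must show $\Psi_T=\Psi_{T'}$ on every shared face $\Delta\ni x_0$. By condition (\ref{cond5}), $\ker i_\Delta^*\alpha_T = \ker i_\Delta^*\alpha_{T'}$. Hence the restrictions to $\Delta$ of the leaves of both foliations are leaves of the same (involutive, since it is an intersection of involutive systems with $T\Delta$) distribution $D\cap T\Delta$ on $\Delta$. By the mesh-transversal condition, $\gamma$ meets $T$ on an open subset of a face of codimension $\le k$. I expect this to guarantee that every leaf of $D\cap T\Delta$ near $x_0$ passes through a point of $\gamma$ lying in $\Delta$, where both $\Psi_T$ and $\Psi_{T'}$ equal $\gamma^{-1}$. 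Both functions are constant along these leaves, so they agree on $\Delta$. This is the main obstacle. One needs the portion of $\gamma$ lying in $\Delta$ to be transverse to $D\cap T\Delta$ inside $\Delta$ and to hit every leaf. My first attempt would be to reduce to the lowest-dimensional face $\Delta_0\ni x_0$ containing the relevant piece of $\gamma$. On $\Delta_0$ the codimension-$\le k$ hypothesis, together with $\gamma^*\alpha_T$ having full rank, should make $\gamma$ a complete transversal. For higher faces containing $\Delta_0$, I would then use that leaves in $\Delta$ through points near $x_0$ reach $\Delta_0$ by flowing along $D\cap T\Delta$, and propagate the agreement there. If this fails for general faces, I would impose the agreement one face at a time, by induction on codimension, using the fact that $\gamma$ lies in the $k$-skeleton.

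\textbf{Step 3 (apply Lemma \ref{mainlemma}).} Now $\Phi$ is continuous and piecewise-$C^1$ with $d\Phi_T=\bar\alpha_T:=H_T\cdot\alpha_T$. Take $V := d\gamma|_0(\mathbb{R}^k)$. At $x_0$, $\bar\alpha_T|_{x_0}\circ d\gamma|_0 = d(\Psi_T\circ\gamma)|_0 = I$ for every $T$, by the normalization. Hence every convex combination $\sum_T c^T\bar\alpha_T|_{x_0}$ restricts to the identity on $V$, so it has full rank. Lemma \ref{mainlemma} then shows that $\bar\alpha$ is trivializeable by continuous piecewise-$C^1$ diffeomorphisms at $x_0$. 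Since $\bar\alpha$ is a piecewise-$C^1$ annihilator of $D$, $D$ is completely integrable in the stated sense. This also explains why the mesh transversal replaces condition (\ref{cond3}) of Theorem \ref{mainthrm1}: the normalization makes all the $\bar\alpha_T$ coincide on $V$.
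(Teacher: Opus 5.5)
Your Steps 1 and 3 coincide with the paper's proof. The paper also normalizes first integrals of the extended systems $\ker\alpha_T$ so that they equal $\gamma^{-1}$ on the transversal; its maps ${\Phi^i_T}'$ play the role of your $\Psi_T$. It then applies Lemma \ref{mainlemma} with $V=d\gamma|_0(\mathbb{R}^k)$, where $d\Psi_T|_{x_0}\circ d\gamma|_0=I$ for every $T$. The genuine gap is Step 2. You rightly flag it but leave it as an expectation, and it cannot be closed from the stated hypotheses. The paper's proof simply asserts that every $x\in U''\cap\Delta$ is joined by an integral curve of $D_\Delta=D\cap T\Delta$ to a point of $\gamma\cap\Delta$. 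That requires $\gamma\cap\Delta$ to meet every nearby leaf of $D_\Delta$. A mesh transversal, however, is only required to meet each $T$ in \emph{some} face of codimension $\le k$, not to lie in every shared face through $x_0$.

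Here is a counterexample to the statement as written. Take $M=\mathbb{R}^2$ with the quadrant mesh $T_1,\dots,T_4$ of Example \ref{example1}, $k=1$, $\alpha_{T_1}=\alpha_{T_3}=dx+dy$, $\alpha_{T_2}=\alpha_{T_4}=dx-dy$, and $\gamma(s)=(s,0)$.
\begin{itemize}
\item Condition 1 holds with $A_T=0$.
\item Condition 2 holds: on each half-axis the pullbacks of the two adjacent $\alpha_T$ agree up to a factor $\pm1$, and at the vertex all pullbacks vanish.
\item $\gamma$ is a mesh transversal: $\gamma^*\alpha_T=ds$ for every $T$, and $\gamma$ meets each $T$ in one of its edges.
\end{itemize}
Your normalized integrals are $\Psi_{T_1}=x+y$ and $\Psi_{T_2}=x-y$. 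These disagree on the positive $y$-axis $\Delta$. There $D\cap T\Delta=0$, so the leaves of $D_\Delta$ are points and $\gamma\cap\Delta=\{x_0\}$. Consequently neither of your fallbacks (flowing along $D\cap T\Delta$ into a lower face, or induction on codimension) can help.

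In fact the leaves of $\ker\alpha$ are the diamonds $|x|+|y|=c$, so $D$ is not completely integrable at $0$ at all. For any admissible $\Phi$, $\Phi^1$ would be constant on each diamond. This would force $\Phi^2$ to be continuous and injective on a closed curve, which is impossible.

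So an extra hypothesis is needed. One that works: $\gamma$ lies, near $x_0$, in the face $\Delta_0$ with $x_0\in\mathring{\Delta}_0$, and hence in every face through $x_0$. Under that hypothesis your Step 2 goes through. From $x\in\Delta$, move along $D_\Delta$, which is transverse to $\Delta_0$ inside $\Delta$, until you reach $\Delta_0$. On $\Delta_0$, use that $\Psi_T$ and $\Psi_{T'}$ are submersions with the common kernel $D_{\Delta_0}$ that agree on $\gamma$, so they agree near $x_0$.
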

	
	\begin{proof}
		By condition (\ref{cond4}) and the smooth Frobenius theorem, $\alpha_T$ is completely integrable on $U \cap T$. Let $\Phi_T: U \cap T \to \mathbb{R}^n$ be $C^1$ embeddings such that $d\Phi_T(D) = \mathrm{Span}(\pdiff{}{y^{k+1}},\dots,\pdiff{}{y^n})$. Let $S_T := \Phi_T(\gamma([-1,1]^k))$. Since $\gamma$ is a transversal, $S_T$ meets each level set $\{u\} \times \mathbb{R}^{n-k}$ (where $u \in \mathbb{R}^k$ and $\|u\| < \epsilon$ for some small-enough $\epsilon$) at a unique point $z_u \in S_T$. Let $\Gamma_T: B_\epsilon(0) \to S_T$ be the map $u \mapsto z_u$. This is a $C^1$ embedding because $\gamma$ is a $C^1$ embedding and a transversal, and it satisfies $\Gamma_T(0) = 0$.

		Let $\Gamma_T(u) := (\Gamma_T^1(u),\Gamma_T^2(u))$, so $\Gamma_T^1(u)$ is the first $k$ components of $\Gamma_T(u)$ and $\Gamma_T^2(u)$ is the last $n-k$ components. Then we define a new map

		\[\Psi_T(u,v) := (d\gamma^{-1}\circ \Phi_T^{-1} \circ \Gamma_T(u),v - \Gamma_T^2(u)).\]

		$\Psi_T$ is a $C^1$ embedding $B_\epsilon(0) \to \mathbb{R}^n$ which maps $0 \mapsto 0$, and it satisfies $\Psi_T \circ \Phi_T \circ \gamma(s) = (s,0)$ for all $s \in \gamma^{-1}\circ \Phi_T^{-1}(B_\epsilon(0))$. Let ${\Phi_T^i}' := \Psi_T^i \circ \Phi_T$, and let ${\Phi^i}'$ be the piecewise-$C^1$ maps such that ${\Phi^i}'|_T = {\Phi^i_T}'$.

		Let us show that the maps ${\Phi^i}'$ are continuous on a small enough open subset $U'' \subset U'$ such that $x_0 \in U''$. Let $\Delta$ be any polytope containing $x_0$ which lies at the intersection of $T$ and $T'$. By condition (\ref{cond5}), $D_{\Delta} := D \cap T\Delta$ is single-valued and $C^1$ on $U' \cap \Delta$, and by condition (\ref{cond4}) it is also completely integrable on $U' \cap \Delta$. Therefore, for a small enough open subset $U'' \subset U'$ containing $x_0$, $\gamma|_{\gamma^{-1}(\Delta)}$ is transverse to $D_{\Delta}$, and $U''$ can be possibly shrunken so $\gamma^{-1}(U'' \cap \Delta')$ is also simply connected. Let $U''$ be small enough that this is true for all $\Delta$ containing $x_0$.

		Then for all $x \in U'' \cap \Delta$, there exists a unique integral curve $\sigma$ of $D_{\Delta}$ such that $\sigma(0) = x$ and $\sigma(1) = \gamma(p)$ for a unique point $p \in \gamma^{-1}(\Delta)$. By the fundamental theorem of calculus, we have

		\[\Phi_T^i(x) - \Phi_{T'}^i(x) = \Phi_{T}^i(p) - \Phi_{T'}^i(p) - \int_0^1 d\Phi_T^i(\dot{\sigma}(t)) - d\Phi_{T'}^i(\dot{\sigma}(t)) dt.\] 

		Since $\sigma$ is an integral curve of $D_{\Delta}$, $d\Phi_T^i(\dot{\sigma})$ and $d\Phi_{T'}^i(\dot{\sigma})$ both vanish. Meanwhile, because $\Phi_T^i \circ \gamma(s) = s^i$ for $i = 1,\dots,k$, $\Phi^i$ is single-valued on $\gamma([-1,1]^k)$, which contains $p$. Therefore, $\Phi_T^i(x) = \Phi^i_{T'}(x)$. Note that $d\Phi^i$ is also continuous when restricted to the $k$-dimensional subspace $V = d\gamma(T_{0}\mathbb{R}^k) \subset T_{x_0}M$. 

		So, setting $\bar{\alpha}^i := d\Phi^i$, Lemma \ref{mainlemma} implies that $\bar{\alpha}$ is trivializeable by piecewise-smooth diffeomorphisms. And since $\ker\bar\alpha = D$, we have that $D$ is completely integrable by piecewise-smooth diffeomorphisms at $x_0$.
	\end{proof}

	\printbibliography

\end{document}